\newtheorem{theorem}{Theorem}[section]
\newtheorem{lemma}[theorem]{Lemma}
\newtheorem{remark}[theorem]{Remark}
\newtheorem{proposition}[theorem]{Proposition}
\newtheorem{definition}[theorem]{Definition}
\newtheorem{example}[theorem]{Example}
\newenvironment{proof}{\trivlist\item[]\rm{\textbf{Proof.}\ }}{\endtrivlist}
      \def\@setcopyright{}
      \def\serieslogo@{}
\author{Hesam Safa and Morteza Norouzi \\ }
\title{Solvable Lie algebras derived from Lie hyperalgebras}
\begin{document}
\maketitle


\noindent\textbf{Abstract.} Recently in \cite{s-n}, we have investigated Lie algebras and abelian Lie algebras
derived from Lie hyperalgebras using the fundamental relations $\mathcal{L}$ and $\mathcal{A}$, respectively.
In the present paper, continuing this method we obtain solvable Lie algebras from Lie hyperalgebras by $\mathcal{S}_n$-relations. We
show that $\bigcap_{n\geq 1}\mathcal{S}^*_n$ is the smallest equivalence relation on a Lie hyperalgebra such that
the quotient structure is a solvable Lie algebra. We also provide some necessary and sufficient conditions for transitivity of the
relation $\mathcal{S}_n$ using the notion of $\mathcal{S}_n$-part. \\

\textbf{2010 MSC:} 17B60, 17B99, 20N20.

\textbf{Key words:}  Fundamental relation,  Lie hyperalgebra, Solvable Lie algebra.

\section{Introduction}
In the classical theory, the quotient of a group by a normal subgroup is a group.
 In 1934,  Marty \cite{ak} states that the quotient of a group by any subgroup is a {\it hypergroup}.
 Concretely, if $G$ is a group and $H$ is a subgroup of $G$, then the set of all left cosets of $H$ in $G$ is a hypergroup
 with the multiplication $$xH\cdot yH:=\{zH|\ z=xhy,\ h\in H\}.$$
  Clearly, if $H$ is normal in $G$, then the set of  cosets
 turns into the quotient group $G/H$.

 Following such an approach to algebraic structures, many investigations have been made
 on hyperstructure theory (hypergroups, hyperrings, hypermodules, hypervector spaces, hyperalgebras,  etc.)
 whose applications  nowadays have been known in other sciences such as algebra and geometry
 as well as automata, cryptography, artificial intelligence and probability, relational algebras,
 sensor networks, theoretical physics and chemistry (see \cite{cor,davv4,vo} for more information).
 An important part of  studies on hyperstructures is about strongly regular relations ({\it fundamental relations}) that
 actually makes a bridge between classical structures and hyperstructures. In 1970, this connection was achieved by  Koskas \cite{kos}
 using the relation $\beta$ on (semi)hypergroups and its transitive closure to obtain (semi)groups from a quotient of (semi)hypergroups.
 This relation has been studied mainly by Corsini \cite{cor1}, Vougiouklis \cite{vo}, Davvaz \cite{davv10}, Davvaz and Leoreanou-Fotea
 \cite{davv4}, Freni \cite{F01}, Migliorato \cite{mig} and many others. This quotient set not only links hyperstructures with classical
 structures, but also enhances the view of hyperstructures as a generalization of the corresponding classical algebraic
 structures. Hence, studies on this topic have been continued to obtain commutative semigroups \cite{F02}, cyclic groups \cite{mousavi},
 nilpotent groups \cite{davv1}, Engel groups \cite{engel}, solvable groups \cite{davv8}, rings \cite{voug2, davv4}, commutative rings
 \cite{davv6}, Boolean rings \cite{davv7}, modules over commutative rings \cite{davv3}, and also \cite{adineh1, adineh2, nor1, nor2}.

The authors in  \cite{s-n} studied  strongly regular relations on Lie hyperalgebras  and obtained Lie algebras and abelian Lie algebras  from Lie hyperalgebras using the fundamental relations $\mathcal{L}$ and $\mathcal{A}$, respectively. Now in this paper, we investigate solvable Lie algebras  derived from Lie hyperalgebras by $\mathcal{S}_n$-relations
($\mathcal{L}\subseteq\mathcal{S}_n\subseteq\mathcal{S}_1=\mathcal{A}$). We introduce
the relations $\mathcal{S}_n$  and show that their transitive closures are strongly regular relations on a Lie hyperalgebra $L$
such that the quotient $L/\mathcal{S}^*_n$ is a solvable Lie algebra of length at most
$n$. We also prove that if the fundamental Lie algebra $L/\mathcal{L}^*$ is  finite dimensional,
then $\mathcal{S}=\bigcap_{n\geq 1}\mathcal{S}^*_n$
is the smallest equivalence relation on  $L$ such that
$L/\mathcal{S}$ is a (fundamental) solvable Lie algebra. Moreover, it is shown that if $L$ is a simple Lie algebra, then
$L/\mathcal{S}=0$ and $L/\mathcal{L}\cong L$, which is in  coincidence with the concept of simplicity.
Finally, we give  some necessary and sufficient conditions for transitivity of the
relation $\mathcal{S}_n$ using its $\mathcal{S}_n$-part.

A hyperring is an algebraic hyperstructure $(R, +, \cdot)$ where $(R,+)$ is a hypergroup, $(R,\cdot)$ is a semihypergroup and the multiplication $"\cdot"$ is distributive with respect to the addition $"+"$. If $(R, \cdot)$ ($(R-\{0\}, \cdot)$, if $R$ contains 0) is a hypergroup, then $(R, +, \cdot)$ is said to be a hyperfield.
\begin{definition}\normalfont
Let $(R, +, \cdot)$ be a hyperring, $(M, +)$ be a hypergroup and $\mathscr{P}^{*}(M)$ denote the family of all non-empty subsets of $M$,
    together with an external map $R\times M\longrightarrow \mathscr{P}^{*}(M)$, defined by $(r,m)\mapsto r m$ such that for all
    $a, b\in R$ and $x, y\in M$, we have $a(x+y)=ax+ay$, $(a+b)x=ax+bx$ and $(ab)x=a(bx)$. Then $M$ is called a hypermodule over $R$. If we consider a hyperfield $F$ instead of a hyperring $R$, then $M$ is called a hypervector space.
\end{definition}

Throughout the paper, any hypervector space $M$ is assumed to have a scalar identity $0_{_M}$, and any hyperfield $F$
 contains a scalar identity $0_{_F}$ and a multiplication unit $1$ such that
$0_{_F} x=\{0_{_M}\}$  and $1 x=\{x\}$ for all $x\in M$.


\section{$\mathcal{L}$-relation and $\mathcal{A}$-relation on Lie hyperalgebras}

This section is devoted to discuss some preliminary results on Lie algebras obtained from Lie hyperalgebras from \cite{s-n}.

A Lie algebra $(L,\oplus,\odot)$  is a vector space over a field $\mathbb{F}$ equipped with a bilinear map $\lfloor -,- \rfloor : L \times L \to L$, usually called the
 Lie bracket of $L$,  satisfying the following conditions:
\begin{itemize}
\item[$(i)$] $\lfloor x,x\rfloor=0$, for all $x\in L$,
\item[$(ii)$] $\lfloor x,\lfloor y,z\rfloor\rfloor\oplus\lfloor y, \lfloor z,x\rfloor\rfloor\oplus\lfloor z,\lfloor x,y\rfloor\rfloor=0$, for all $x,y,z\in L$ (Jacobi identity).
\end{itemize}

\begin{definition}\label{def}\normalfont
Let $(L,+,\cdot)$  be a hypervector space   over a hyperfield
$(F,+,\cdot)$  and
$[-,-] : L \times L \to \mathscr{P}^{\ast}(L)$  given by $(x,y)\to[x,y]$ be a bilinear map
which we would call the {\it hyperbracket} of $L$.
Then $L$ is said to be a Lie hyperalgebra if the following axioms hold:
\begin{itemize}
\item[$(i)$] $0\in[x,x]$, for all $x\in L$,
\item[$(ii)$] $0\in \big{(}[x,[y,z]]+[y,[z,x]]+[z,[x,y]]\big{)}$, for all $x,y,z\in L$.
\end{itemize}
\end{definition}

We use the operations $"\oplus,\odot"$ and  $\lfloor-,-\rfloor$ as the bracket in Lie algebras, and
$"+, \cdot"$ and $[-,-]$
as the hyperbracket in Lie hyperalgebras.
Also, the bilinearity of the hyperbracket means:
\[[\lambda_1 x_1 +\lambda_2 x_2,y]=\lambda_1 [x_1,y]+\lambda_2[x_2,y],\]
\[[x,\lambda_1 y_1+\lambda_2 y_2]=\lambda_1[x,y_1]+\lambda_2[x,y_2],\]
for all $x,x_1,x_2,y,y_1,y_2\in L$ and $\lambda_1,\lambda_2\in F$ (see \cite{davv5,s-n} for details and examples).\\

Let $L$ be a Lie hyperalgebra on $F$ and $\rho\subseteq L\times L$ be an equivalence relation. For  non-empty subsets $A$
and $B$ of $L$, define
$$A\ \bar{\bar{\rho}}\ B \ \ \Longleftrightarrow \ \ a\ \rho\ b,\  \forall a\in A, \forall b\in B.$$
The relation $\rho$ is said to be strongly regular on the left (on the right) if $x\ \rho\ y $ implies $a+x\ \bar{\bar{\rho}}\ a+y$,
$\lambda\cdot x\ \bar{\bar{\rho}}\ \lambda\cdot y$ and $[a,x]\ \bar{\bar{\rho}}\ [a,y]$
(if $x\ \rho\ y$ implies $x+a\ \bar{\bar{\rho}}\ y+a$,
$x\cdot\lambda\ \bar{\bar{\rho}}\ y\cdot \lambda$ and $[x,a]\ \bar{\bar{\rho}}\ [y,a]$), for all $x,y,a\in L$ and $\lambda\in F$.
In addition, $\rho$ is called strongly regular if it is strongly
regular on the both left and right.

\begin{lemma}\label{lemma1}\cite[Proposition 2.5]{s-n}
Let $L$ be a Lie hyperalgebra over a hyperfield $F$, and $\rho$ and $\delta$ be equivalence relations on $L$ and  $F$, respectively.
 If $\rho$ and $\delta$ are strongly regular, then the quotient $L/\rho$ is a  Lie algebra over the field $F/\delta$, under the following operations:
 \begin{eqnarray*}
 \bar{x}\oplus\bar{y}&=&\bar{z}, \ \ \text{for all}\ z\in x+y\\
\widetilde{\lambda}\odot\bar{x}&=&\bar{z}, \ \ \text{for all}\ z\in \lambda\cdot x\\
\lfloor\bar{x},\bar{y}\rfloor&=&\bar{z}, \ \ \text{for all}\ z\in [x,y],
\end{eqnarray*}
for $\bar{x},\bar{y}\in L/\rho$ and $\widetilde{\lambda}\in F/\delta$.
 Conversely, if $L/\rho$ together with the above operations is a Lie algebra over the field $F/\delta$, then
  $\rho$ is strongly regular.
\end{lemma}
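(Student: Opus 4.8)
The plan is to prove both directions of the equivalence by carefully unpacking what it means for the three operations $\oplus$, $\odot$, $\lfloor-,-\rfloor$ to be well-defined on the quotient and what the strong regularity of $\rho$ (and $\delta$) buys us. First I would verify that the operations are well-defined: assume $\rho$ and $\delta$ are strongly regular. Given $\bar x_1=\bar x_2$ and $\bar y_1=\bar y_2$, I want $x_1+y_1\ \bar{\bar\rho}\ x_2+y_2$. Using strong regularity on the left, from $y_1\ \rho\ y_2$ we get $x_1+y_1\ \bar{\bar\rho}\ x_1+y_2$, and using strong regularity on the right, from $x_1\ \rho\ x_2$ we get $x_1+y_2\ \bar{\bar\rho}\ x_2+y_2$. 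Since $\bar{\bar\rho}$ on non-empty subsets is transitive in the appropriate sense (any element of $x_1+y_1$ is $\rho$-related to any element of $x_1+y_2$, which is $\rho$-related to any element of $x_2+y_2$), it follows that all elements of $x_1+y_1$ and $x_2+y_2$ lie in a single $\rho$-class, so $\bar z$ is independent of the choice of $z\in x_1+y_1$ and of the representatives. The same argument applies verbatim to $\odot$ (using strong regularity with respect to scalar multiplication, plus the corresponding property of $\delta$ on $F$) and to $\lfloor-,-\rfloor$ (using $[a,x]\ \bar{\bar\rho}\ [a,y]$ and $[x,a]\ \bar{\bar\rho}\ [y,a]$).

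Next I would check that $(L/\rho,\oplus,\odot)$ is a vector space over $F/\delta$ and that $\lfloor-,-\rfloor$ makes it a Lie algebra. The vector-space axioms are inherited: for instance associativity of $\oplus$ follows because $(x+y)+z$ and $x+(y+z)$ are equal as subsets of $L$ in a hypervector space (the additive hypergroup is associative), so picking representatives gives the same $\rho$-class; the scalar identity $0_M$ descends to a zero, $1\odot\bar x=\bar x$ since $1\cdot x=\{x\}$, and bilinearity/distributivity pass to the quotient because the corresponding hyper-identities hold as equalities of subsets. For the Lie algebra structure, axiom $(i)$ of Definition~\ref{def} says $0\in[x,x]$; since $\lfloor\bar x,\bar x\rfloor=\bar z$ for \emph{any} $z\in[x,x]$ and $\bar z$ is well-defined, we may take $z=0$, giving $\lfloor\bar x,\bar x\rfloor=\bar 0$. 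Similarly the Jacobi identity follows from axiom $(ii)$: $0$ belongs to the subset $[x,[y,z]]+[y,[z,x]]+[z,[x,y]]$, and this subset maps to the single class $\lfloor\bar x,\lfloor\bar y,\bar z\rfloor\rfloor\oplus\lfloor\bar y,\lfloor\bar z,\bar x\rfloor\rfloor\oplus\lfloor\bar z,\lfloor\bar x,\bar y\rfloor\rfloor$, which therefore equals $\bar 0$. Antisymmetry of the bracket (that $\lfloor\bar x,\bar y\rfloor=-\lfloor\bar y,\bar x\rfloor$) is a consequence of $(i)$ and bilinearity in the usual way.

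For the converse, I would assume $L/\rho$ with the stated operations is a Lie algebra over $F/\delta$ and recover strong regularity. Suppose $x\ \rho\ y$. For any $a\in L$ and any $u\in a+x$, $v\in a+y$, we have $\bar u=\bar a\oplus\bar x=\bar a\oplus\bar y=\bar v$ because the operation is (by hypothesis) a genuine single-valued operation on classes and $\bar x=\bar y$; hence $u\ \rho\ v$, i.e.\ $a+x\ \bar{\bar\rho}\ a+y$. The same reasoning with $\odot$ and with $\lfloor-,-\rfloor$ on either side yields the remaining conditions, so $\rho$ is strongly regular on both sides. The main point requiring care — and the step I expect to be the only real obstacle — is the bookkeeping in the first part: making precise that $\bar{\bar\rho}$ behaves transitively across the chain of single-valued reductions so that "$\bar z$ for all $z\in x+y$" is genuinely unambiguous, and checking that every hyperalgebra identity used (associativity of $+$, the distributive laws $(a+b)x=ax+bx$, $(ab)x=a(bx)$, and bilinearity of $[-,-]$) holds as an \emph{equality} of subsets rather than merely an inclusion, since only then do the induced operations inherit the classical axioms without further hypotheses. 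Everything else is a routine transfer of axioms through the quotient map.
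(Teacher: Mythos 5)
The paper gives no proof of this lemma: it is imported verbatim from \cite[Proposition 2.5]{s-n}, so there is no in-paper argument to compare against line by line. Your proof is the standard one for strongly regular (fundamental) relations --- well-definedness of the three induced operations via left and right strong regularity together with the observation that $\bar{\bar{\rho}}$ chains transitively across representatives, transfer of the axioms using the facts that the hyperidentities hold as equalities of sets and that $0\in[x,x]$ and $0\in[x,[y,z]]+[y,[z,x]]+[z,[x,y]]$ force the corresponding quotient classes to equal $\bar{0}$, and the converse by reading the single-valuedness of the quotient operations back as strong regularity --- and this is essentially the argument of the cited source. The one step you pass over as ``routine transfer'' that the paper itself treats as needing separate justification is commutativity of $\oplus$ on $L/\rho$: a Lie algebra's underlying additive group must be abelian, while strong regularity of $\rho$ on a hypergroup by itself only yields a group (compare the parenthetical remark in the proof of Theorem \ref{th2}, which cites \cite{davv3,s-n} precisely for the abelianness of the quotient). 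If the additive hypergroup of $L$ is assumed canonical (commutative), this is immediate; otherwise it is a genuine extra input that your sketch, like the lemma statement itself, leaves implicit.
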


In \cite{s-n}, the relation $\mathcal{L}$ on a Lie hyperalgebra $L$ (over a hyperfield $F$) is defined as follows:
 \begin{eqnarray*}
x\ \mathcal{L}\ y \ \Longleftrightarrow  \{x,y\}\subseteq \sum_{i=1}^n \ell_i  &\text{where}& \ell_i=f_i(g_{i1},g_{i2},\ldots,g_{im_i})\\
   &\text{with}&  g_{ij}= \Big{(}\sum_{k=1}^{p_{ij}}\big{(} \prod_{r=1}^{q_{ijk}}\lambda_{ijkr} \big{)}\Big{)}\cdot h_{ij},
\end{eqnarray*}
such that $1\leq j\leq m_i$, $h_{ij}\in L$, $\lambda_{ijkr}\in F$,  and
$f_i$ is  an arbitrary {\it hyperbracket function}, i.e.
 $f_i(g_{i1},g_{i2},\ldots,g_{im_i})$ is  an arbitrary composition of the elements $g_{i1},g_{i2},\ldots,g_{im_i}$ only by  hyperbracket.
 Moreover, it is shown that
 $\mathcal{L}^{\ast}$ is a strongly regular relation on $L$, which is also the smallest equivalence relation on $L$ such that $L/\mathcal{L}^\ast$ is a (fundamental) Lie algebra (see Theorem 3.3 and Corollary 3.4 in \cite{s-n}).

 Furthermore, the  $\mathcal{A}$-relation on $L$ is defined as:
\begin{eqnarray*}
x\ \mathcal{A}\ y \ \Longleftrightarrow \ &\exists n\in\mathbb{N},\ \exists \sigma\in \mathbb{S}_n,\ \exists f_1,\ldots,f_n,\ \exists m_1,\ldots,m_n\in\mathbb{N},\
\exists\sigma_i\in\mathbb{S}_{m_i},\\
&\exists h_{i1},\ldots,h_{im_i}\in L,\ \exists p_{i1},\ldots,p_{im_{i}}\in\mathbb{N},\ \exists\sigma_{ij}\in\mathbb{S}_{p_{ij}},\
\exists q_{ij1},\ldots,q_{ijp_{ij}}\in\mathbb{N},\\
&\exists  \lambda_{ijk1},\ldots,\lambda_{ijkq_{ijk}}\in F,\ \exists\sigma_{ijk}\in\mathbb{S}_{q_{ijk}}
\end{eqnarray*}
such that
 \begin{eqnarray*}
 x\in \sum_{i=1}^n \ell_i \  \ \ \text{where}\ \ \ \ell_i=f_i(g_{i1},g_{i2},\ldots,g_{im_i})
 \ \ \ \text{with}\ \ \   g_{ij}= \Big{(}\sum_{k=1}^{p_{ij}}\big{(} \prod_{r=1}^{q_{ijk}}\lambda_{ijkr} \big{)}\Big{)}\cdot h_{ij},
\end{eqnarray*}
for $1\leq j\leq m_i$, and
 \begin{eqnarray*}
 y\in \sum_{i=1}^n \ell'_{i}  & \text{where}&  \ell'_i=f_{\sigma(i)}(g'_{i1},g'_{i2}, \ldots,g'_{im_{\sigma(i)}}) \  \ \text{with}\  \
  g'_{ij}=  A_{\sigma(i)\sigma_{\sigma(i)}(j)} \cdot h_{\sigma(i)\sigma_{\sigma(i)}(j)}\\
 &\text{in which}& 1\leq j\leq m_{\sigma(i)} \ \ \text{and}\ \ A_{ij}=\sum_{k=1}^{p_{ij}}B_{ij\sigma_{ij}(k)} \  \ \text{with}\  \ B_{ijk}=\prod_{r=1}^{q_{ijk}}
   \lambda_{ijk\sigma_{ijk}(r)}.
 \end{eqnarray*}
 It is proved that
  $\mathcal{A}^{\ast}$ is a strongly regular relation, and also the smallest equivalence relation on  $L$ such that
   $L/\mathcal{A}^\ast$ is an abelian Lie algebra (\cite[Theorem 4.2, Corollary 4.3]{s-n}).


\section{Main results}

In this section, we introduce and study the smallest equivalence  relation on a Lie hyperalgebra such that the quotient
structure is a solvable Lie algebra.
 A Lie algebra $L$ is called solvable of length $n$, if
$L^{(n)}=0$ and $L^{(n-1)}\not=0$, where $L^{(i)}$ denotes the $(i+1)$st term of the derived series of $L$, defined
inductively by $L^{(0)} =L$ and  $L^{(i)}=\lfloor L^{(i-1)},L^{(i-1)}\rfloor=\langle \lfloor x,y\rfloor|\ x,y\in L^{(i-1)}\rangle$ for $i\geq 1$.

Now, let $L$ be a Lie hyperalgebra. For non-empty subsets $A,B$ of $L$,   define
$$[A,B]=\bigcup_{a\in A,\ b\in B}[a,b],$$
and put $L^{[0]} =L$ and $L^{[i]}=[L^{[i-1]},L^{[i-1]}]$ for $i\geq 1$. Clearly, $L^{[i]}\subseteq L^{[i-1]}$ for every $i\geq 1$.
Also, for every Lie algebra $L$  which may be considered as a trivial Lie hyperalgebra (see Example \ref{ex1} below), we have
 $L^{[i]}\subseteq L^{(i)}$ for every $i\geq 0$.

\begin{definition}\label{def1}\normalfont
Let  $L$ be a Lie hyperalgebra over a hyperfield $F$.
For $n\in \mathbb{N}$, we define the  relation $\mathcal{S}_{n}$ on  $L$ as follows:
 \begin{eqnarray*}
x\ \mathcal{S}_{n}\ y \ \Longleftrightarrow \ &\exists t\in\mathbb{N},\  \exists \sigma\in \mathbb{S}_t,\ \exists f_1,\ldots,f_t,\ \exists
m_1,\ldots,m_t\in\mathbb{N},\ \exists h_{i1},\ldots,h_{im_i}\in L,\\
&\exists p_{i1},\ldots,p_{im_{i}}\in\mathbb{N},\ \text{and for all}\ \ 1\leq i\leq t \ \ \text{and}\ \ 1\leq j\leq m_i:\\
&\Big{(} \exists\sigma_i\in\mathbb{S}_{m_i}\ \ \text{such that}\ \ \sigma_i(j)=j \ \ \text{if}\ \ h_{ij}\not\in L^{[n-1]}\Big{)},
\ \text{and also} \\
&\exists\sigma_{ij}\in\mathbb{S}_{p_{ij}},\
\exists q_{ij1},\ldots,q_{ijp_{ij}}\in\mathbb{N},\ \exists \lambda_{ijk1},\ldots,\lambda_{ijkq_{ijk}}\in F,\ \exists\sigma_{ijk}\in\mathbb{S}_{q_{ijk}}
\end{eqnarray*}
such that
 \begin{eqnarray*}
 x\in \sum_{i=1}^t \ell_i  \ \ \ \text{where}\ \ \ \ell_i=f_i(g_{i1},g_{i2},\ldots,g_{im_i})
 \ \ \ \text{with}\ \ \  g_{ij}= \Big{(}\sum_{k=1}^{p_{ij}}\big{(} \prod_{r=1}^{q_{ijk}}\lambda_{ijkr} \big{)}\Big{)}\cdot h_{ij},
\end{eqnarray*}
for $1\leq j\leq m_i$, and
 \begin{eqnarray*}
 y\in \sum_{i=1}^t \ell'_{i}  & \text{where}&  \ell'_i=f_{\sigma(i)}(g'_{i1},g'_{i2}, \ldots,g'_{im_{\sigma(i)}}) \  \ \text{with}\  \
  g'_{ij}=  A_{\sigma(i)\sigma_{\sigma(i)}(j)} \cdot h_{\sigma(i)\sigma_{\sigma(i)}(j)}\\
 &\text{in which}& 1\leq j\leq m_{\sigma(i)} \ \ \text{and}\ \ A_{ij}=\sum_{k=1}^{p_{ij}}B_{ij\sigma_{ij}(k)} \  \ \text{with}\  \ B_{ijk}=\prod_{r=1}^{q_{ijk}}
   \lambda_{ijk\sigma_{ijk}(r)}.
 \end{eqnarray*}
\end{definition}

Clearly for every $n\geq 1$, $\mathcal{S}_n$ is a reflexive and symmetric relation and $\mathcal{L}\subseteq\mathcal{S}_{n+1}\subseteq\mathcal{S}_n\subseteq\mathcal{S}_1=\mathcal{A}$.
 Consider $\mathcal{S}_n^*$ as the transitive closure of $\mathcal{S}_n$ (\cite[Example 3.2]{s-n} shows that $\mathcal{S}_n$ is not
 necessarily  transitive, for all $n\geq 1$). Also,  $\mathcal{S}^*_n(x)$ denotes
 the equivalence class of $x\in L$.

\begin{proposition}\label{prop1}
For every $n\geq 1$, $\mathcal{S}_n^*$ is a strongly regular relation on a Lie hyperalgebra $L$.
\end{proposition}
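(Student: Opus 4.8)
The plan is to mimic the proof strategy used for $\mathcal{L}^*$ and $\mathcal{A}^*$ in \cite{s-n}. Since $\mathcal{S}_n^*$ is by construction an equivalence relation, it suffices to show it is strongly regular on both sides, and by symmetry of the situation (the definition of $\mathcal{S}_n$ is visibly left–right symmetric in the roles of $x$ and $y$), it is enough to establish strong regularity on one side, say the left. A standard lemma in the theory of fundamental relations says that if $\rho$ is an equivalence relation such that $\rho$ itself satisfies the compatibility $x\,\rho\,y \Rightarrow a+x\ \bar{\bar{\rho^*}}\ a+y$ (and similarly for scalar multiplication and hyperbracket), then the transitive closure $\rho^*$ is strongly regular; so the first step I would carry out is to reduce the problem to proving: for all $a\in L$, $\lambda\in F$, if $x\ \mathcal{S}_n\ y$ then every element of $a+x$ is $\mathcal{S}_n^*$-equivalent to every element of $a+y$, every element of $\lambda\cdot x$ is $\mathcal{S}_n^*$-equivalent to every element of $\lambda\cdot y$, and every element of $[a,x]$ is $\mathcal{S}_n^*$-equivalent to every element of $[a,x]$ — sorry, to every element of $[a,y]$ — and likewise on the right.

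Second, I would unwind the definition: assume $x\ \mathcal{S}_n\ y$, so $x\in\sum_{i=1}^t\ell_i$ and $y\in\sum_{i=1}^t\ell_i'$ with the $\ell_i,\ell_i'$ built from the same data $h_{ij},\lambda_{ijkr}$ up to the prescribed permutations $\sigma,\sigma_i,\sigma_{ij},\sigma_{ijk}$, where crucially $\sigma_i$ fixes every index $j$ with $h_{ij}\notin L^{[n-1]}$. Now pick any $u\in a+x$ and $v\in a+y$. Using the definition of the hypervector space operations and the fact that $a$ can itself be written as a (trivial) sum of one term $f_0(h_0)$ with $h_0=a$ and scalar $1$, one sees that $u\in\sum_{i=0}^t\tilde\ell_i$ and $v\in\sum_{i=0}^t\tilde\ell_i'$ for an enlarged family of terms. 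To check $u\ \mathcal{S}_n\ v$ I must supply the permutations for this enlarged family: extend $\sigma$ to fix the new index $0$, keep all the lower-level permutations unchanged, and for the new term use the identity permutation. The side condition on $\sigma_i$ is automatically preserved because I have not altered any $\sigma_i$ and added only an identity. The case of $\lambda\cdot x$ versus $\lambda\cdot y$ is handled by absorbing $\lambda$ into the leading scalar sum $\sum_k\prod_r\lambda_{ijkr}$ of each $g_{i1}$ (or, cleanly, by using bilinearity to multiply through), again leaving all permutation data intact.

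The genuinely delicate case — and the one I expect to be the main obstacle — is the hyperbracket: showing that $w\in[a,x]$ and $w'\in[a,y]$ satisfy $w\ \mathcal{S}_n^*\ w'$. Here one uses bilinearity of $[-,-]$ to push the bracket with $a$ inside the sum $\sum_i\ell_i$ and inside each hyperbracket function $f_i$, so that $[a,x]\subseteq\sum_i[a,\ell_i]$ and $[a,\ell_i]$ is again (a union of) hyperbracket functions applied to the same $g_{ij}$'s together with one new slot occupied by $a$. The subtlety is the side condition: after bracketing with $a$, the term $\ell_i$ has grown, its constituent elements may or may not lie in $L^{[n-1]}$, and I must verify that the permutation $\sigma_i$ I use on the enlarged index set still fixes every index whose element is outside $L^{[n-1]}$. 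The point is that bracketing with $a$ can only push elements deeper into the derived series — more precisely, any element of $[a,\ell_i]$ that fails to lie in $L^{[n-1]}$ must already come from the "outside $L^{[n-1]}$" part of $\ell_i$ — so the old $\sigma_i$, which fixed exactly those problematic indices, still works after being extended by the identity on the newly introduced $a$-slot; one may also need to iterate $\mathcal{S}_n$ finitely many times (hence land in $\mathcal{S}_n^*$ rather than $\mathcal{S}_n$) to rearrange the nested bracket into the canonical shape of Definition \ref{def1}. Verifying this monotonicity-of-the-derived-filtration claim carefully, and bookkeeping the permutations through the nested structure, is where the real work lies; everything else is a routine translation of the corresponding arguments for $\mathcal{A}^*$ in \cite{s-n}.
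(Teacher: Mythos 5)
Your proposal follows essentially the same route as the paper: reduce to one-step compatibility of $\mathcal{S}_n$ with the hyperoperations, and for the hyperbracket adjoin $a$ as a new argument of an enlarged hyperbracket function $f_i^*(-,a)=[f_i(-),a]$, extending each $\sigma_i$ to a $\tau_i\in\mathbb{S}_{m_i+1}$ that fixes the new slot. That is exactly the paper's argument, and your handling of $a+x$ and $\lambda\cdot x$ matches what the paper delegates to \cite{davv3}. However, the step you single out as ``where the real work lies'' is a red herring. The side condition in Definition \ref{def1} constrains only the atoms $h_{ij}$: it demands $\sigma_i(j)=j$ whenever $h_{ij}\notin L^{[n-1]}$, and says nothing about where the composite terms $\ell_i$ or the elements of $[a,\ell_i]$ land. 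After bracketing, the atom list is $h_{i1},\ldots,h_{im_i},a$; the old indices inherit the condition from $\sigma_i$, and the new index is fixed by $\tau_i$ by construction, so the condition holds there whether or not $a\in L^{[n-1]}$ --- this is precisely the remark the paper makes. No ``monotonicity of the derived filtration'' is needed, and the claim you lean on is in fact false as stated: bracketing with an arbitrary $a$ does not push elements deeper into the \emph{derived} series (that is lower-central-series behaviour; $[L,L^{[k]}]\not\subseteq L^{[k+1]}$ in general). Likewise no iteration of $\mathcal{S}_n$ is needed to reach canonical form: $[f_i(-),a]$ is already a hyperbracket function, so a single $\mathcal{S}_n$-step gives $u\ \mathcal{S}_n\ v$ for all $u\in[x,a]$, $v\in[y,a]$. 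Since you do also state the correct justification (extend by the identity on the $a$-slot), this is a misdiagnosis of where the difficulty lies rather than a gap in the argument.
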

\begin{proof}
We use an argument similar to the proof of \cite[Theorem 4.2]{s-n} on the $\mathcal{A}$-relation.
 We  show that $x\ \mathcal{S}_n \ y$ implies $[x,a]\ \bar{\bar{\mathcal{S}_n^*}}\ [y,a]$ for every $a\in L$.
Using above notations,  $x\in \sum_{i=1}^t \ell_i$ and $y\in\sum_{i=1}^t \ell'_i$, and hence
  \begin{eqnarray*}
&&[x,a]\subseteq\Big{[}\sum_{i=1}^t \ell_i,a\Big{]}=\sum_{i=1}^t \big{[}f_i(g_{i1},g_{i2},\ldots,g_{im_i}),a\big{]},\\
&&[y,a]\subseteq\Big{[}\sum_{i=1}^t \ell'_{i},a\Big{]}
=\sum_{i=1}^t\big{[}f_{\sigma(i)}(g'_{i1},g'_{i2}, \ldots,g'_{im_{\sigma(i)}}),a\big{]}.
 \end{eqnarray*}
 For all $1\leq i\leq t$,  consider  $\tau_i\in\mathbb{S}_{m_i +1}$ given by
$\tau_i(j)=\sigma_i(j)$ for all $1\leq j\leq m_i$ and $\tau_i(m_i+1)=m_i+1$, and put  $g_{i(m_i +1)}=h_{i(m_i +1)}=a$
and  $f_i^*(-,a)=[f_i(-),a]$.
 Thus
 \begin{eqnarray*}
 [x,a]\subseteq \sum_{i=1}^{t} f_i^*(g_{i1},g_{i2},\ldots,g_{im_i},g_{i(m_i +1)}) \ \
 \text{with}\ \    g_{ij}= \Big{(}\sum_{k=1}^{p_{ij}}\big{(} \prod_{r=1}^{q_{ijk}}\lambda_{ijkr} \big{)}\Big{)}\cdot h_{ij}
\end{eqnarray*}
for $1\leq j\leq m_i +1$, and
 \begin{eqnarray*}
 [y,a]\subseteq   \sum_{i=1}^{t} f_{\sigma(i)}^*(g'_{i1},g'_{i2},\ldots,g'_{im_{\sigma(i)}},g'_{i(m_{\sigma(i)} +1)})
  \ \ \text{with}\ \  g'_{ij}= A_{\sigma(i)\tau_{\sigma(i)}(j)} \cdot h_{\sigma(i)\tau_{\sigma(i)}(j)}\\
 \text{in which} \ \ 1\leq j\leq m_{\sigma(i)} +1 \ \ \text{and}\ \ A_{ij}= \sum_{k=1}^{p_{ij}}B_{ij\sigma_{ij}(k)}  \ \ \text{with}\ \ B_{ijk}=\prod_{r=1}^{q_{ijk}}
   \lambda_{ijk\sigma_{ijk}(r)},
\end{eqnarray*}
such that $\tau_i(j)=j$ if $h_{ij}\not\in L^{[n-1]}$, for $1\leq i\leq t$ and $1\leq j\leq m_i +1$.
Note that due to the definition of $\tau_i$, it does not matter whether the element
$h_{i(m_i +1)}=a$ belongs to $L^{[n-1]}$ or not.
It follows that $u\ \mathcal{S}_n^*\ v$ for all $u\in [x,a]$ and  $v\in [y,a]$, and so
 $[x,a]\ \bar{\bar{\mathcal{S}_n^*}}\ [y,a]$.
Similarly, we may prove that $[a,x]\ \bar{\bar{\mathcal{S}_n^*}}\ [a,y]$.
Also similar to  \cite[Lemma 2.2]{davv3},  one can  show that $\mathcal{S}_n^*$ is strongly regular  on $(L, +, \cdot)$,
which  completes the proof. $\ \  \Box$
\end{proof}

Let $(R,+,\cdot)$ be a hyperring and $x,y\in R$. In \cite{davv6}, the relation $\alpha$ is defined on $R$ as follows:
 \begin{eqnarray*}
x\ \alpha\ y \ \Longleftrightarrow \ \exists t\in\mathbb{N},\  \exists \sigma\in \mathbb{S}_t,\ \exists k_1,\ldots,k_t\in\mathbb{N},\
\exists\sigma_i\in\mathbb{S}_{k_i},\
\exists x_{i1},\ldots,x_{ik_i}\in R
\end{eqnarray*}
such that
 \begin{eqnarray*}
x\in \sum_{i=1}^t\Big{(} \prod_{j=1}^{k_i} x_{ij} \Big{)}\ \ \text{and}\ \ y\in\sum_{i=1}^t A_{\sigma(i)}\ \ \text{where}\ \
A_i=\prod_{j=1}^{k_i}x_{i\sigma_{i}(j)}.
\end{eqnarray*}
Also, $\alpha^\ast$ is the transitive closure of $\alpha$ (see also \cite[Definition 7.1.1]{davv4}).

A Lie algebra $L$, over a field  $\mathbb{F}$, is said to be abelian, if $L^{(1)}=0$.
It is easy to see that if the characteristic of $\mathbb{F}$ is not 2 and $\lfloor x,y\rfloor=\lfloor y,x\rfloor$ for all $x,y\in L$, then  $L$ is abelian (see also \cite{s-n}). In the following results,  $F$ is a hyperfield such that the characteristic of the
field $F/\alpha^\ast$ is not $2$.

\begin{theorem}\label{th2}
Let $L$ be a Lie hyperalgebra over  $F$.  Then
$L/\mathcal{S}_n^*$ is a solvable Lie algebra of length at most $n$ over  $F/\alpha^\ast$.
\end{theorem}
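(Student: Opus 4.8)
\ The plan is to realize $L/\mathcal{S}_n^*$ as a genuine Lie algebra over $F/\alpha^*$ and then to show that its $n$-th derived subalgebra vanishes, by transporting the sets $L^{[i]}$ through the canonical projection and exploiting that $\mathcal{S}_n$ symmetrizes the hyperbracket on elements of $L^{[n-1]}$. For the first part, Proposition~\ref{prop1} gives that $\mathcal{S}_n^*$ is strongly regular on $L$; the relation $\alpha^*$ is strongly regular on the hyperfield $F$ with $F/\alpha^*$ a field of characteristic $\neq 2$ by hypothesis; and the pattern of coefficient permutations in Definition~\ref{def1} (the $A_{ij},B_{ijk}$ together with the $\sigma_{ij},\sigma_{ijk}$) is exactly the $\alpha$-pattern on $F$. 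Hence Lemma~\ref{lemma1} applies and $L/\mathcal{S}_n^*$ is a Lie algebra over $F/\alpha^*$, just as for $\mathcal{A}=\mathcal{S}_1$ in \cite[Theorem~4.2]{s-n}. Writing $\pi\colon L\to L/\mathcal{S}_n^*$ for the canonical surjection, Lemma~\ref{lemma1} also says $\pi$ carries each hyperoperation to the corresponding operation; in particular $\pi([a,b])=\{\lfloor\pi(a),\pi(b)\rfloor\}$ for all $a,b\in L$.

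Next I would prove by induction on $i\geq 0$ that $(L/\mathcal{S}_n^*)^{(i)}\subseteq\langle\pi(L^{[i]})\rangle$, the $F/\alpha^*$-subspace generated by $\pi(L^{[i]})$. The case $i=0$ is clear since $\pi$ is onto. For the step, $(L/\mathcal{S}_n^*)^{(i)}=\lfloor(L/\mathcal{S}_n^*)^{(i-1)},(L/\mathcal{S}_n^*)^{(i-1)}\rfloor$ is generated by brackets of elements of $\langle\pi(L^{[i-1]})\rangle$, and by bilinearity of the bracket these lie in the span of $\{\lfloor\pi(a),\pi(b)\rfloor:a,b\in L^{[i-1]}\}=\pi([L^{[i-1]},L^{[i-1]}])=\pi(L^{[i]})$. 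Taking $i=n-1$ and one further bracket, $(L/\mathcal{S}_n^*)^{(n)}$ is spanned by the elements $\lfloor\pi(a),\pi(b)\rfloor$ with $a,b\in L^{[n-1]}$.

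It then suffices to show $\lfloor\pi(a),\pi(b)\rfloor=0$ for all $a,b\in L^{[n-1]}$. This is where Definition~\ref{def1} is used directly: given arbitrary $z\in[a,b]$ and $z'\in[b,a]$, I would write down the instance with $t=1$, $\sigma=\mathrm{id}$, $f_1$ the arity-two hyperbracket, $m_1=2$, $h_{11}=a$, $h_{12}=b$, the inner data trivial ($p_{1j}=q_{1j1}=1$, $\lambda_{1j11}=1$), and $\sigma_1=(1\,2)$. The side condition requiring $\sigma_1(j)=j$ whenever $h_{1j}\notin L^{[n-1]}$ is vacuous precisely because $a,b\in L^{[n-1]}$ --- this is the one place where the superscript $n-1$ enters. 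Reading off the two resulting expressions (using $1\cdot x=\{x\}$) gives $z\in[a,b]=\ell_1$ and $z'\in[b,a]=\ell_1'$, so $z\ \mathcal{S}_n\ z'$, whence $\lfloor\pi(a),\pi(b)\rfloor=\pi(z)=\pi(z')=\lfloor\pi(b),\pi(a)\rfloor$. Since every Lie bracket is antisymmetric and $\mathrm{char}(F/\alpha^*)\neq 2$, this forces $\lfloor\pi(a),\pi(b)\rfloor=0$. Combined with the previous paragraph, $(L/\mathcal{S}_n^*)^{(n)}=0$, i.e.\ $L/\mathcal{S}_n^*$ is solvable of length at most $n$ over $F/\alpha^*$.

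The main obstacle is bookkeeping rather than anything conceptual: one must check carefully that $\pi$ genuinely intertwines the hyperoperations with the quotient operations (so that $\pi(L^{[i]})$ really controls $(L/\mathcal{S}_n^*)^{(i)}$) and that the long tuple of permutations in Definition~\ref{def1} is instantiated coherently in the symmetrization step. The constraint on the $\sigma_i$ is the substantive feature distinguishing $\mathcal{S}_n$ from $\mathcal{A}$, but, as the argument shows, it is satisfied for free there because both arguments of the bracket lie in $L^{[n-1]}$; the remainder parallels the treatment of $\mathcal{A}^*$ in \cite{s-n}.
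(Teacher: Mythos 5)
Your proposal is correct and follows essentially the same route as the paper: establish that $L/\mathcal{S}_n^*$ is a Lie algebra via Proposition~\ref{prop1} and Lemma~\ref{lemma1}, identify $(L/\mathcal{S}_n^*)^{(i)}$ with the span of the classes of $L^{[i]}$ by induction, and then use the instance of Definition~\ref{def1} with $t=1$, $\sigma_1=(1\,2)$ and $h_{11},h_{12}\in L^{[n-1]}$ (where the constraint on $\sigma_1$ is vacuous) together with $\mathrm{char}(F/\alpha^*)\neq 2$ to kill all brackets at level $n-1$. The only cosmetic difference is that you prove the inclusion $(L/\mathcal{S}_n^*)^{(i)}\subseteq\langle\pi(L^{[i]})\rangle$ rather than the equality stated in the paper, which is exactly the direction needed.
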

\begin{proof}
By Lemma \ref{lemma1} and Proposition \ref{prop1}, $L/\mathcal{S}_n^*$ is a Lie algebra over $F/\alpha^\ast$ (note that
$(L/\mathcal{S}_n^*,\oplus)$ is
also an abelian group  (see \cite{davv3,s-n})). We show that $L/\mathcal{S}_n^*$ is solvable of length at most $n$.
One may inductively prove that
\begin{equation}\label{eq1}
(L/\mathcal{S}_n^*)^{(i)}=\langle\mathcal{S}_n^*(h)|\ h\in L^{[i]}\rangle,
 \end{equation}
 for all $i\geq 0$.
In Definition \ref{def1}, let $t=1$, $m_1=2$, $g_{11}=h_{11}\in L^{[n-1]}$, $g_{12}=h_{12}\in L^{[n-1]}$, $f_1(g_{11},g_{12})=[g_{11},g_{12}]$,  and
  $\sigma_1\in \mathbb{S}_2$ such that $\sigma_1(1)=2$ and $\sigma_1(2)=1$.
  For every $x\in[g_{11},g_{12}]$ and $y\in[g_{1\sigma_1(1)},g_{1\sigma_1(2)}]=[g_{12},g_{11}]$, we have
 $x\ \mathcal{S}_n\ y$. Hence $x\ \mathcal{S}_n^*\ y$ and thus
 \begin{equation}\label{eq2}
\lfloor \mathcal{S}_n^*(g_{11}),\mathcal{S}_n^*(g_{12})\rfloor=\mathcal{S}_n^*(x)=\mathcal{S}_n^*(y)=
 \lfloor\mathcal{S}_n^*(g_{12}),\mathcal{S}_n^*(g_{11})\rfloor.
 \end{equation}
  Now since $g_{11},g_{12}\in L^{[n-1]}$,  equalities (\ref{eq1}) and (\ref{eq2}) imply that
  $(L/\mathcal{S}_n^*)^{(n-1)}$ is an abelian Lie algebra, which means $(L/\mathcal{S}_n^*)^{(n)}=0_{_{L/\mathcal{S}_n^*}}$.
  Therefore, $L/\mathcal{S}_n^*$ is solvable of length at most $n$.  $\ \  \Box$
\end{proof}

The following  trivial Lie hyperalgebra illustrates  the difference between the relations and also the quotients of a Lie hyperalgebra
by them.
\begin{example}\label{ex1} \normalfont
Let $(L,\oplus,\odot)$ be a 4-dimensional vector space  over $\mathbb{R}$ with a basis $\{a,b,c,d\}$. It is easy to check that $L$
is a   Lie hyperalgebra (over the trivial hyperfield $\mathbb{R}$)
  with the  hyperoperations:
$x+ y :=\{x\oplus y\}$,  $r\cdot x :=\{r\odot x\}$ for all $x,y\in L$ and $r\in \mathbb{R}$,
and the  bilinear hyperbracket:
\[\begin{array}{c|cccc}
[- ,- ] & a & b & c & d  \\ \hline
a & \{0\} & \{0\} & \{0\} & \{0\} \\
b & \{0\} & \{0\} & \{a\} & \{b\} \\
c & \{0\} & \{-1\odot a\} & \{0\}& \{-1\odot c\} \\
d & \{0\} & \{-1\odot b\} & \{c\} & \{0\}
\end{array}\]
Clearly, $L/\mathcal{L}^*$ is the 4-dimensional Lie algebra (over $\mathbb{R}/\gamma^*\cong\mathbb{R}$) with the basis $\{\bar{a},\bar{b},\bar{c},\bar{d}\}$
and  brackets $\lfloor \bar{b},\bar{c}\rfloor=\bar{a}$, $\lfloor \bar{b},\bar{d}\rfloor=\bar{b}$ and $\lfloor \bar{d},\bar{c}\rfloor=\bar{c}$
 where $\bar{x}=\mathcal{L}^*(x)$ for every $x\in L$ (see \cite{s-n,voug2} for
 the definition of the $\gamma$-relation).

Also,    $a\in[b,c]$ and $(-1\odot a)\in [c,b]$, which imply that $a\ \mathcal{A}\ (-1\odot a)$ and hence
$\widetilde{a}=\widetilde{{-1\odot a}}$, or equivalently $\widetilde{a}=\widetilde{0}$  where $\widetilde{x}=\mathcal{A}^*(x)$.
By using a similar manner, we get  $\widetilde{b}=\widetilde{c}=\widetilde{0}$. Therefore,
 $L/\mathcal{A}^*$ is the abelian 1-dimensional Lie algebra (over $\mathbb{R}/\alpha^*\cong\mathbb{R}$) with the basis
 $\{\widetilde{d}\}$.

 Moreover,  $b\in[b,d]$ and $c\in[d,c]$ which means $b,c\in L^{[1]}$. Now since  $a\in[b,c]$ and $(-1\odot a)\in [c,b]$, we get $a\ \mathcal{S}_2\ (-1\odot a)$ and hence $\widehat{a}=\widehat{0}$ where $\widehat{x}=\mathcal{S}_2^*(x)$. Observe  that $d\not\in L^{[1]}$, so
 $\widehat{b},\widehat{c}$ and $\widehat{d}$
 are non-zero.
 Thus, $L/\mathcal{S}_2^*$ is the 3-dimensional Lie algebra over $\mathbb{R}$ with the basis
 $\{\widehat{b},\widehat{c},\widehat{d}\}$ and  brackets $\lfloor\widehat{b},\widehat{d}\rfloor=\widehat{b}$ and
 $\lfloor\widehat{d},\widehat{c}\rfloor=\widehat{c}$. It is easy to see that $L/\mathcal{S}_2^*$ is solvable of length 2.

 Finally, it is not difficult to show that  $\mathcal{S}_n=\mathcal{L}$ for all $n\geq 3$ (note that $L/\mathcal{L}^*$ is solvable of length 3). In this example, we have
 $\mathcal{L}\subsetneqq\mathcal{S}_2\subsetneqq\mathcal{A}$ and
  $\dim(L/\mathcal{A}^*)\lneqq\dim(L/\mathcal{S}_2^*)\lneqq\dim(L/\mathcal{L}^*)$.  \\
\end{example}

Let $L$ be a  Lie hyperalgebra  such that the fundamental Lie algebra
$L/\mathcal{L}^*$ is  finite dimensional,  and define $$\mathcal{S}=\bigcap_{n\geq 1}\mathcal{S}^*_n.$$ Since
\[\dim\frac{L}{\mathcal{A}^*}\leq \dim\frac{L}{\mathcal{S}^*_2}\leq \cdots\leq\dim\frac{L}{\mathcal{S}^*_{n}}\leq\dim\frac{L}{\mathcal{S}^*_{n+1}}\leq
\cdots\leq\dim\frac{L}{\mathcal{L}^*},\]
for all $n\geq 2$,  there exists $m\in \mathbb{N}$ such that
 $\dim(L/\mathcal{S}^*_{m})=\dim(L/\mathcal{S}^*_{m+i})$ for all $i\geq 1$. Thus,
 $L/\mathcal{S}^*_{m}\cong L/\mathcal{S}^*_{m+i}$ (as vector spaces) and
  $\mathcal{S}^*_{m}=\mathcal{S}^*_{m+i}$ for all $i\geq 1$, which implies that $\mathcal{S}=\mathcal{S}^*_{m}$
  (for instance, in the above example $m=3$).
  Therefore by Proposition \ref{prop1} and Theorem \ref{th2}, $\mathcal{S}$ is a strongly regular relation on
   $L$ and $L/\mathcal{S}$ is a solvable Lie algebra.

\begin{theorem}
Let $L$ be a  Lie hyperalgebra such that $L/\mathcal{L}^*$ is  finite dimensional. Then $\mathcal{S}$ is the smallest equivalence relation
on $L$ such that $L/\mathcal{S}$ is a  solvable Lie algebra.
\end{theorem}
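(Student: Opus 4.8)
The paragraph preceding the statement already establishes that $\mathcal{S}$ is strongly regular and that $L/\mathcal{S}$ is a solvable Lie algebra, so what remains is minimality: if $\rho$ is an equivalence relation on $L$ for which $L/\rho$ is a solvable Lie algebra, then $\mathcal{S}\subseteq\rho$. The plan is to fix $n\in\mathbb{N}$ with $(L/\rho)^{(n)}=0$ --- for instance the solvable length of $L/\rho$; if $L/\rho=0$ then $\rho=L\times L\supseteq\mathcal{S}$ trivially, so we may assume $n\geq 1$ --- and to prove that $\mathcal{S}_n\subseteq\rho$. Since $\rho$ is transitive this yields $\mathcal{S}_n^*\subseteq\rho$, whence $\mathcal{S}=\bigcap_{m\geq 1}\mathcal{S}_m^*\subseteq\mathcal{S}_n^*\subseteq\rho$, as desired. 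Write $\bar z:=\rho(z)$ for the $\rho$-class of $z\in L$. Note that $\rho$ is strongly regular by Lemma~\ref{lemma1}, so in $L/\rho$ one has $\overline{[a,b]}=\lfloor\bar a,\bar b\rfloor$, $\overline{\lambda\cdot a}=\widetilde\lambda\odot\bar a$ and $\overline{a+b}=\bar a\oplus\bar b$, and $(L/\rho,\oplus)$ is an abelian group (see \cite{davv3,s-n}); moreover $L/\rho$ is, by definition, a vector space over a field, which is therefore commutative.

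Two facts underlie the argument. First, an easy induction on $i$ --- with $i=0$ trivial and the inductive step given by $\overline{L^{[i]}}=\{\lfloor\bar a,\bar b\rfloor\ :\ a,b\in L^{[i-1]}\}\subseteq(L/\rho)^{(i)}$, exactly as in the proof of Theorem~\ref{th2} --- shows that the canonical image of $L^{[i]}$ in $L/\rho$ lies in $(L/\rho)^{(i)}$ for every $i\geq 0$; in particular the image of $L^{[n-1]}$ lies in $N:=(L/\rho)^{(n-1)}$, which is an abelian ideal of $L/\rho$ because $(L/\rho)^{(n)}=0$. Second, any iterated Lie bracket of elements of $L/\rho$ at least two of whose arguments belong to $N$ is equal to $0$: since $N$ is an ideal, any iterated bracket having an argument in $N$ takes its value in $N$ (induction on the bracket tree), so when two arguments lie in $N$ the subtree rooted at their lowest common ancestor evaluates to an element of $\lfloor N,N\rfloor=0$, and by bilinearity this zero propagates to the root.

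Now let $x\ \mathcal{S}_n\ y$ and adopt the notation of Definition~\ref{def1}. Applying the canonical map and strong regularity, $\bar x=\bigoplus_{i=1}^t\overline{\ell_i}$ and $\bar y=\bigoplus_{i=1}^t\overline{\ell'_i}$, where $\overline{\ell_i}=F_i(\bar g_{i1},\ldots,\bar g_{im_i})$ with $F_i$ the iterated Lie bracket determined by the hyperbracket function $f_i$ and $\bar g_{ij}=\widetilde{c_{ij}}\odot\bar h_{ij}$, $\widetilde{c_{ij}}$ being the image in the scalar field of $\sum_k\prod_r\lambda_{ijkr}$; likewise $\overline{\ell'_i}$ is $F_{\sigma(i)}$ evaluated at the elements $\widetilde{A_{\sigma(i)\sigma_{\sigma(i)}(j)}}\odot\bar h_{\sigma(i)\sigma_{\sigma(i)}(j)}$, $1\leq j\leq m_{\sigma(i)}$. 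Since the field over which $L/\rho$ is a vector space is commutative, the reorderings of sums and products of scalars built into Definition~\ref{def1} have no effect in the quotient, so $\widetilde{A_{ij}}=\widetilde{c_{ij}}$; pulling the (commuting) scalars out of the multilinear map $F_{\sigma(i)}$ then shows that $\overline{\ell'_i}$ and $\overline{\ell_{\sigma(i)}}$ differ only through applying the permutation $\sigma_{\sigma(i)}$ to the arguments $\bar h_{\sigma(i)1},\ldots,\bar h_{\sigma(i)m_{\sigma(i)}}$ of $F_{\sigma(i)}$. By Definition~\ref{def1} this permutation moves only indices $j$ with $h_{\sigma(i)j}\in L^{[n-1]}$, that is, only arguments whose images lie in $N$. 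If at most one such index occurs, $\sigma_{\sigma(i)}$ is the identity and $\overline{\ell'_i}=\overline{\ell_{\sigma(i)}}$; if at least two occur, then $F_{\sigma(i)}$ has at least two arguments in $N$ both before and after the permutation, so the second fact above forces $\overline{\ell'_i}=0=\overline{\ell_{\sigma(i)}}$. In either case $\overline{\ell'_i}=\overline{\ell_{\sigma(i)}}$, and since $\oplus$ is commutative, $\bar y=\bigoplus_i\overline{\ell'_i}=\bigoplus_i\overline{\ell_{\sigma(i)}}=\bigoplus_j\overline{\ell_j}=\bar x$, i.e. $x\ \rho\ y$. This proves $\mathcal{S}_n\subseteq\rho$ and hence the theorem.

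The step I expect to demand the most care is the last one: unwinding Definition~\ref{def1} so as to pair the $i$-th summand of $y$ with the $\sigma(i)$-th summand of $x$, and to verify that the outer permutation $\sigma$ and the innermost permutations $\sigma_{ij},\sigma_{ijk}$ are harmlessly absorbed by commutativity of $\oplus$, of scalar addition and of scalar multiplication, while only $\sigma_{\sigma(i)}$ interacts with the derived series --- this interaction being neutralised exactly by the clause ``$\sigma_i(j)=j$ if $h_{ij}\notin L^{[n-1]}$'' together with the second fact above. That fact --- an abelian ideal annihilates brackets long enough to reach into it twice --- is the conceptual core, and is short.
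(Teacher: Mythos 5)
Your proof is correct, and its overall skeleton matches the paper's: both reduce minimality to showing that some $\mathcal{S}_m$ is contained in the given relation $\xi$ with $L/\xi$ solvable of length $n$, both rest on the observation that the image of $L^{[i]}$ in $L/\xi$ lies in $(L/\xi)^{(i)}$, and both absorb the outer permutation $\sigma$ by commutativity of $\oplus$. The difference is in the choice of $m$ and the lemma that choice forces. The paper takes $m=n+1$: then the only permutable arguments $h_{ij}$ lie in $L^{[n]}$, whose image is already $0$ in $L/\xi$, so every summand $\ell_i$ containing such an argument dies outright and the surviving summands carry only identity permutations $\sigma_i$ --- no further lemma is needed. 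You take the sharper $m=n$: the permutable arguments then land only in the abelian ideal $N=(L/\xi)^{(n-1)}$ rather than in $0$, and you need your ``second fact'' (an iterated bracket with at least two arguments in an abelian ideal vanishes, via the lowest-common-ancestor argument) to conclude that a summand with a genuinely moved argument is zero on both sides. That fact is correct and your case split (a permutation moving at most one point is the identity; otherwise two arguments lie in $N$) is airtight. What your version buys is the stronger statement $\mathcal{S}_n\subseteq\xi$ whenever $L/\xi$ is solvable of length $n$, i.e.\ that $\mathcal{S}_n^*$ is minimal among equivalence relations whose quotient is solvable of length at most $n$ --- a natural sharpening of Theorem \ref{th2} that the paper's $n+1$ shortcut does not give; what the paper's version buys is brevity, since it never needs to reason inside the abelian ideal. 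Both yield $\mathcal{S}\subseteq\xi$ and hence the theorem.
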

\begin{proof}
Let $\xi$ be an equivalence relation on $L$ such that
 $L/\xi$ is a solvable Lie algebra of length $n$ say, and also $(L/\xi,\oplus)$ is an abelian group.
   It is easy to see that if $h\in L^{[n]}$, then $\xi(h)\in (L/\xi)^{(n)}=0_{L/\xi}$.
 Now if $x\ \mathcal{S}_{n+1}\ y$, then using the notations of Definition \ref{def1}, we have
  $x\in \sum_{i=1}^t \ell_i$ and $y\in \sum_{i=1}^t \ell_{\sigma(i)}$. Since $(L/\xi,\oplus)$ is  abelian,
we get $\xi(x)=\oplus_{i\in I}\xi(\ell_i)=\xi(y)$ where $I\subseteq\{1,\ldots,t\}$ such that
  none of the elements  $h_{i1},h_{i2},\ldots,h_{im_i}$ belong to $L^{[n]}$ for every $i\in I$. Therefore, $x\ \xi\ y$ and so $\mathcal{S}^*_{n+1}\subseteq\xi$
which implies that $\mathcal{S}\subseteq\xi$. This completes the proof.   $\ \  \Box$
\end{proof}

A non-abelian Lie algebra $L$ is called {\it simple}, if it contains  no ideals other than 0 and $L$.
Also, a Lie algebra $L$ is said to be  {\it perfect}, if $L=L^{(1)}$.
It is easy to see that every simple Lie algebra is perfect (see \cite{erd}).

\begin{proposition}
Let $L$ be a perfect Lie algebra. Then $L/\mathcal{S}=0$, $L/\mathcal{L}\cong L$  and
$$\mathcal{L}=\{(x,x)|\ x\in L\}\subseteq \mathcal{S}=L\times L.$$
\end{proposition}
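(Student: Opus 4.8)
The plan is to first compute $\mathcal{L}$ exactly, deduce $L/\mathcal{L}\cong L$ from it, and then use perfectness together with Theorem~\ref{th2} to force $L/\mathcal{S}=0$.

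First I would view $L$ as a trivial Lie hyperalgebra over the trivial hyperfield $\mathbb{F}$, as in Example~\ref{ex1}: the operations $+,\cdot$ on $F$, the $F$-action on $L$, and the hyperbracket $[-,-]$ on $L$ all return singletons. Tracing through the definition of $\mathcal{L}$ recalled from \cite{s-n}, each scalar $\sum_k\prod_r\lambda_{ijkr}$ is a single element of $\mathbb{F}$, each $g_{ij}=\big(\sum_k\prod_r\lambda_{ijkr}\big)\cdot h_{ij}$ is a singleton in $L$, each $\ell_i=f_i(g_{i1},\ldots,g_{im_i})$ is a singleton (a composition of singletons by the single-valued bracket), and hence $\sum_{i=1}^n\ell_i$ is a singleton. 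So $x\ \mathcal{L}\ y$ forces $\{x,y\}$ to lie inside a one-element set, i.e. $x=y$; conversely reflexivity (take the one-term expression $1\cdot x$ with no bracket applied) gives $\{(x,x):x\in L\}\subseteq\mathcal{L}$. Therefore $\mathcal{L}=\mathcal{L}^*=\{(x,x):x\in L\}$ is the diagonal; in particular it is transitive, so $L/\mathcal{L}=L/\mathcal{L}^*$, and comparing the quotient operations of Lemma~\ref{lemma1} with those of $L$ (every class being a singleton, $\overline{x}\oplus\overline{y}=\overline{x\oplus y}$, $\widetilde{\lambda}\odot\overline{x}=\overline{\lambda\odot x}$, $\lfloor\overline{x},\overline{y}\rfloor=\overline{\lfloor x,y\rfloor}$) shows $L/\mathcal{L}\cong L$ as Lie algebras over $\mathbb{F}\cong F/\gamma^*\cong F/\alpha^*$.

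Next, I would fix $n\geq1$ and exploit $\mathcal{L}\subseteq\mathcal{S}_n$, hence $\mathcal{L}^*\subseteq\mathcal{S}_n^*$. This makes $\mathcal{L}^*(x)\mapsto\mathcal{S}_n^*(x)$ a well-defined surjective Lie-algebra homomorphism $L\cong L/\mathcal{L}^*\twoheadrightarrow L/\mathcal{S}_n^*$ (well-definedness from the inclusion, the homomorphism property straight from the quotient operations of Lemma~\ref{lemma1}, everything over $\mathbb{F}$ because $\gamma^*,\alpha^*$ are trivial on a trivial hyperfield). A surjective image of a perfect Lie algebra is perfect — if $\varphi$ is onto then $\varphi(L)=\varphi(L^{(1)})=(\varphi(L))^{(1)}$ — so $L/\mathcal{S}_n^*$ is perfect; but it is also solvable by Theorem~\ref{th2}, and a Lie algebra that is simultaneously perfect and solvable is $0$ (from $K=K^{(1)}$ and $K^{(m)}=0$ one gets $K=K^{(m)}=0$ by induction on $m$). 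Hence $L/\mathcal{S}_n^*=0$, i.e. $\mathcal{S}_n^*=L\times L$.

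Finally, since this holds for every $n\geq1$, $\mathcal{S}=\bigcap_{n\geq1}\mathcal{S}_n^*=L\times L$ (the intersection is meaningful for an arbitrary Lie hyperalgebra, so no finite-dimensionality assumption is needed here), whence $L/\mathcal{S}=0$; together with the first step this yields the chain $\mathcal{L}=\{(x,x):x\in L\}\subseteq\mathcal{S}=L\times L$. I expect the only genuinely delicate point to be the first step: checking that in a trivial Lie hyperalgebra every nested expression occurring in the definition of $\mathcal{L}$ really collapses to a singleton, and that the scalar relations $\gamma^*,\alpha^*$ are trivial so that all the quotients under discussion are honest Lie algebras over $\mathbb{F}$. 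After that the argument reduces to the two elementary facts ``a quotient of a perfect Lie algebra is perfect'' and ``a perfect solvable Lie algebra is zero'', and note that since every simple Lie algebra is perfect this proposition indeed subsumes the simple case announced in the introduction.
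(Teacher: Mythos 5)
Your proposal is correct, and while the first half (computing $\mathcal{L}$ as the diagonal on a trivial Lie hyperalgebra and deducing $L/\mathcal{L}\cong L$) matches the paper's opening step --- which the paper simply declares ``clearly'' and you verify in detail --- your argument for $L/\mathcal{S}=0$ is genuinely different from the paper's. The paper works element-wise: given $a\in L$, it uses perfectness to write $a=\lfloor x,y\rfloor$ with $x,y\in L^{[n-1]}$ ``without loss of generality,'' observes $a\in[x,y]$ and $-1\odot a\in[y,x]$, concludes $a\ \mathcal{S}_n\ (-1\odot a)$, hence $\mathcal{S}_n^*(a)=\mathcal{S}_n^*(0)$ since the characteristic is not $2$, so $\mathcal{S}_n^*=L\times L$ for every $n$. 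You instead argue structurally: $\mathcal{L}^*\subseteq\mathcal{S}_n^*$ yields a surjective homomorphism $L\cong L/\mathcal{L}^*\twoheadrightarrow L/\mathcal{S}_n^*$, so the target is perfect; it is solvable by Theorem~\ref{th2}; and a perfect solvable Lie algebra vanishes. Your route buys some robustness: the paper's ``without loss of generality'' quietly conflates the set $L^{[n-1]}$ of iterated brackets with its linear span $L^{(n-1)}$ (one needs $\langle L^{[n-1]}\rangle=L$ for a perfect $L$, which is true but requires a short induction using bilinearity), and also tacitly passes from single brackets to linear combinations; your argument bypasses both points entirely, at the cost of invoking Theorem~\ref{th2} rather than exhibiting the relation explicitly. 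Both proofs need the standing hypothesis that $F/\alpha^*$ has characteristic different from $2$ --- the paper uses it directly in the step $\bar a=\overline{-1\odot a}\Rightarrow\bar a=\bar 0$, and you inherit it through Theorem~\ref{th2}. Your remark that the intersection $\bigcap_{n\ge1}\mathcal{S}_n^*$ needs no finite-dimensionality here (since each term is already $L\times L$) is a worthwhile clarification, as the paper only defines $\mathcal{S}$ in general under that hypothesis.
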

\begin{proof}
Clearly, for every Lie algebra  (trivial Lie hyperalgebra) $L$, we have $\mathcal{L}=\mathcal{L}^*=\{(x,x)|\ x\in L\}$
 (the diagonal relation on $L$) and $L/\mathcal{L}\cong L$.
Now, let $a\in L$. Since $L$ is perfect, we have $L=L^{(n)}$ for all $n\geq 1$. Without loss of generality, one may suppose that
$a=\lfloor x,y\rfloor$ where $x,y\in L^{[n-1]}$. Hence $a\in [x,y]$ and $(-1\odot a)=\lfloor y,x\rfloor\in[y,x]$. Thus
 $a\ \mathcal{S}_n\ (-1\odot a)$
and so  $\bar{a}=\bar{0}$ where $\bar{a}=\mathcal{S}^*_n(a)$.  Therefore $L/\mathcal{S}^*_n=0$ and
$\mathcal{S}^*_n=L\times L$ for all $n\geq 1$, which completes the proof.  $\ \  \Box$
\end{proof}

\begin{example}\label{ex2} \normalfont
Let $L$ be the 3-dimensional Lie algebra with the basis $\{a,b,c\}$ and non-zero Lie brackets
$\lfloor a,b\rfloor=c$, $\lfloor b,c\rfloor=a$ and $\lfloor c,a\rfloor=b$.
Clearly, $L$ is simple  (perfect) and $\mathcal{S}_n=\mathcal{A}$ for all $n\geq 1$.
One can easily check that $L/\mathcal{S}=0$ and $L/\mathcal{L}\cong L$.
\end{example}

\begin{remark}\normalfont
Using a similar argument as in Example \ref{ex1}, one may show that for a finite dimensional Lie algebra $L$, we have
 $$\dim(L/\mathcal{S}^*_n)=\dim L-\dim L^{(n)}.$$
Now, let  $m$ be the smallest positive integer such that
$L^{(m)}=L^{(m+i)}$ for all $i\geq 1$. Then $\dim(L/\mathcal{S})=\dim L-\dim L^{(m)}$.
In particular if $L$ is  solvable, then   $L/\mathcal{S}\cong L$ (in this case $\mathcal{S}=\mathcal{L}$).
\end{remark}


In what follows, we discuss the transitivity  conditions  of $\mathcal{S}_n$ (for a fixed $n\geq 1$).
 Under the  notations of Definition \ref{def1}, a non-empty subset $K$  of a Lie hyperalgebra $L$ (over an arbitrary
  hyperfield $F$) is said to be an
 $\mathcal{S}_n$-part of $L$ if for every $t\in \mathbb{N}$, every $\sigma\in \mathbb{S}_t$,
   every $\ell_1,\ldots,\ell_t$
  such that  $\ell_i=f_i(g_{i1},g_{i2},\ldots,g_{im_i})$
 with   $g_{ij}= \Big{(}\sum_{k=1}^{p_{ij}}\big{(} \prod_{r=1}^{q_{ijk}}\lambda_{ijkr} \big{)}\Big{)}\cdot h_{ij}$, and
  every $\sigma_i\in \mathbb{S}_{m_i}$ such that $\sigma_i(j)=j$ if $h_{ij}\not\in  L^{[n-1]}$, we have
\[\sum_{i=1}^t \ell_i \cap K\not=\emptyset \ \ \ \Longrightarrow\ \ \ \ \sum_{i=1}^t \ell'_{i}\subseteq K,\]
where $\ell'_i$ is that given in Definition \ref{def1}.

\begin{example}\normalfont
In Example \ref{ex1}, it is easy to see that the vector subspace $\langle a\rangle$ is an $\mathcal{S}_n$-part of $L$
for all $n\geq 1$. Moreover, the singleton $\{a\}$ is an $\mathcal{S}_n$-part ($\mathcal{L}$-part) of $L$
only for  $n\geq 3$, since $[b,c]\cap \{a\}\not=\emptyset$ and $b,c\in L^{[1]}$ but $[c,b]\nsubseteq\{a\}$.
Also, $\{b\}$ is an $\mathcal{S}_n$-part of $L$ for all $n\geq 2$, since $d\not\in L^{[1]}$.
\end{example}

\begin{lemma}\label{lem31}
Let $K$ be a non-empty subset of a Lie hyperalgebra $L$ and $x,y\in L$. Then $(i)$ $K$ is an $\mathcal{S}_n$-part of $L$, if and only if $(ii)$ $x\in K$ and $x\ \mathcal{S}_n\ y$ implies $y\in K$, if and only if $(iii)$ $x\in K$ and $x\ \mathcal{S}^*_n\ y$ implies $y\in K$.
\end{lemma}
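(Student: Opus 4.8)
The plan is to prove the cycle of implications $(i)\Rightarrow(ii)\Rightarrow(iii)\Rightarrow(i)$, which is the standard route for characterizations of this ``part'' type. Throughout I would use two elementary facts about $\mathcal{S}_n$ noted before Proposition \ref{prop1}: it is symmetric, and $\mathcal{S}_n^*$ is its transitive closure.

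For $(i)\Rightarrow(ii)$, suppose $K$ is an $\mathcal{S}_n$-part, $x\in K$ and $x\ \mathcal{S}_n\ y$. Unwinding Definition \ref{def1} produces the full tuple of data (namely $t$, $\sigma$, the $f_i,m_i,h_{ij},p_{ij},q_{ijk},\lambda_{ijkr}$, and the permutations $\sigma_i$ with $\sigma_i(j)=j$ whenever $h_{ij}\notin L^{[n-1]}$, together with the auxiliary $\sigma_{ij},\sigma_{ijk}$) such that $x\in\sum_{i=1}^t\ell_i$ and $y\in\sum_{i=1}^t\ell'_i$. Since $x\in K$ we have $\sum_{i=1}^t\ell_i\cap K\neq\emptyset$, so the defining property of an $\mathcal{S}_n$-part, applied to exactly this data, gives $\sum_{i=1}^t\ell'_i\subseteq K$, whence $y\in K$.

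For $(ii)\Rightarrow(iii)$, if $x\ \mathcal{S}_n^*\ y$ then there is a finite chain $x=z_0\ \mathcal{S}_n\ z_1\ \mathcal{S}_n\ \cdots\ \mathcal{S}_n\ z_k=y$, and a one-line induction on $k$ applying $(ii)$ at each link (starting from $z_0=x\in K$) gives $y=z_k\in K$; the orientation of each link is immaterial because $\mathcal{S}_n$ is symmetric. For $(iii)\Rightarrow(i)$, fix admissible data $t,\sigma,\ell_i,\sigma_i$ (with $\sigma_i(j)=j$ if $h_{ij}\notin L^{[n-1]}$) and the associated $\ell'_i$ as in Definition \ref{def1}, and assume $\sum_{i=1}^t\ell_i\cap K\neq\emptyset$; choose $x$ in that intersection and let $y\in\sum_{i=1}^t\ell'_i$ be arbitrary. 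This very data certifies $x\ \mathcal{S}_n\ y$, hence $x\ \mathcal{S}_n^*\ y$, so $(iii)$ forces $y\in K$; since $y$ was arbitrary, $\sum_{i=1}^t\ell'_i\subseteq K$, i.e.\ $K$ is an $\mathcal{S}_n$-part.

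I expect no genuine obstacle: the whole content is bookkeeping that identifies a single tuple of parameters and permutations as one that is simultaneously ``ranged over'' in the definition of an $\mathcal{S}_n$-part and ``witnessed'' in the existential statement of Definition \ref{def1}. The one point deserving a careful sentence is exactly this matching of quantifiers (together with the remark that symmetry of $\mathcal{S}_n$ lets the chain argument in $(ii)\Rightarrow(iii)$ ignore the direction of each step).
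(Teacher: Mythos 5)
Your proposal is correct and follows the same route as the paper: the cycle $(i)\Rightarrow(ii)\Rightarrow(iii)\Rightarrow(i)$, with the witnessing data from Definition \ref{def1} matched against the quantifiers in the definition of an $\mathcal{S}_n$-part, and a chain induction for $(ii)\Rightarrow(iii)$. The only difference is your explicit remark on the symmetry of $\mathcal{S}_n$ in the chain step, which the paper leaves implicit.
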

\begin{proof}
$(i)\Rightarrow(ii)$ Let $x\in K$ and $x\ \mathcal{S}_n\ y$. Then  there exist $\ell_1,\ldots,\ell_t$
 such that $x\in \sum_{i=1}^t \ell_i$ and $y\in \sum_{i=1}^t \ell'_i$.
Thus $x\in \sum_{i=1}^n \ell_i\cap K$ and by $(i)$ we obtain $y\in K$.

$(ii)\Rightarrow(iii)$ Let $x\in K$ and $x\ \mathcal{S}^*_n\ y$. Then there exist $m\in \mathbb{N}$ and $x=z_1,z_2\ldots,z_{m-1},z_m=y\in L$
such that $x=z_1\ \mathcal{S}_n\ z_2\ \mathcal{S}_n \ldots \mathcal{S}_n\ z_{m-1}\  \mathcal{S}_n\ z_m=y$. Since $x\in K$, by $(ii)$ we have $z_2\in K$.
Repeating  this process ($m-1$ times) yields $y\in K$.

$(iii)\Rightarrow(i)$ Assume that $\sum_{i=1}^t \ell_i \cap K\not=\emptyset$. Then there exists $x\in \sum_{i=1}^t \ell_i \cap K$.
Suppose that   $y\in \sum_{i=1}^t \ell'_i$. Thus   $x\ \mathcal{S}^*_n\ y$ and by $(iii)$, $y\in K$ which implies that
  $\sum_{i=1}^t \ell'_{i}\subseteq K$.   $\ \  \Box$
\end{proof}

Under the  notations of Definition \ref{def1}, consider the sets
$T_{t}(x)=\Big{\{}(\ell_1,\ldots,\ell_t)|\ x\in\sum_{i=1}^t \ell_i\Big{\}}$
such that $\ell_i =f_i(g_{i1},g_{i2},\ldots,g_{im_i})$ with $g_{ij}=\Big{(}\sum_{k=1}^{p_{ij}}\big{(} \prod_{r=1}^{q_{ijk}}\lambda_{ijkr} \big{)}\Big{)}\cdot h_{ij}$, \ \ $P_t(x)=\bigcup_{t\geq 1}\Big{\{} \sum_{i=1}^t \ell'_{i}|\ (\ell_1,\ldots,\ell_t)\in T_t(x)\Big{\}}$ and
$P(x)=\bigcup_{t\geq 1} P_t(x)$.

\begin{remark} \label{rem1}\normalfont
It is easy to check that  $P(x)=\{y\in L|\ x\ \mathcal{S}_n\ y\}$ for every $x\in L$.
\end{remark}

In the following theorem, we give  the transitivity conditions of  $\mathcal{S}_n$.
\begin{theorem}
Let  $L$ be a Lie hyperalgebra over a hyperfield $F$. Then $(i)$ $\mathcal{S}_n$ is transitive, if and only if $(ii)$ $\mathcal{S}^*_n(x)=P(x)$ for every $x\in L$, if and only if $(iii)$ $P(x)$ is an $\mathcal{S}_n$-part of $L$ for every $x\in L$.
\end{theorem}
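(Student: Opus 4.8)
The plan is to prove the cycle of implications $(i)\Rightarrow(ii)\Rightarrow(iii)\Rightarrow(i)$, using Remark~\ref{rem1} (which identifies $P(x)$ with $\{y\in L\mid x\ \mathcal{S}_n\ y\}$) and Lemma~\ref{lem31} (which characterizes $\mathcal{S}_n$-parts) as the main tools.

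For $(i)\Rightarrow(ii)$: assume $\mathcal{S}_n$ is transitive, so $\mathcal{S}_n=\mathcal{S}^*_n$. Then for every $x$, $\mathcal{S}^*_n(x)=\mathcal{S}_n(x)=\{y\in L\mid x\ \mathcal{S}_n\ y\}$, which equals $P(x)$ by Remark~\ref{rem1}. So this step is essentially immediate once the dictionary is in place.

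For $(ii)\Rightarrow(iii)$: assume $\mathcal{S}^*_n(x)=P(x)$ for every $x\in L$. Fix $x$; I want to show $P(x)$ is an $\mathcal{S}_n$-part, and by Lemma~\ref{lem31} it suffices to check condition $(iii)$ of that lemma, namely that $u\in P(x)$ and $u\ \mathcal{S}^*_n\ v$ imply $v\in P(x)$. But $u\in P(x)=\mathcal{S}^*_n(x)$ means $u\ \mathcal{S}^*_n\ x$; combined with $u\ \mathcal{S}^*_n\ v$ and symmetry/transitivity of $\mathcal{S}^*_n$ we get $x\ \mathcal{S}^*_n\ v$, i.e. $v\in\mathcal{S}^*_n(x)=P(x)$. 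Hence $P(x)$ is an $\mathcal{S}_n$-part.

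For $(iii)\Rightarrow(i)$: assume $P(x)$ is an $\mathcal{S}_n$-part for every $x$, and suppose $x\ \mathcal{S}_n\ y$ and $y\ \mathcal{S}_n\ z$; I must show $x\ \mathcal{S}_n\ z$. From $x\ \mathcal{S}_n\ y$ and Remark~\ref{rem1}, $y\in P(x)$, and trivially $x\in P(x)$ (reflexivity of $\mathcal{S}_n$). Now $y\ \mathcal{S}_n\ z$ together with the fact that $P(x)$ is an $\mathcal{S}_n$-part and $y\in P(x)$ gives, via Lemma~\ref{lem31}$(ii)$, that $z\in P(x)$; by Remark~\ref{rem1} this says $x\ \mathcal{S}_n\ z$. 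Therefore $\mathcal{S}_n$ is transitive.

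The one genuinely delicate point to get right is the bookkeeping in Remark~\ref{rem1} and Lemma~\ref{lem31}: specifically that $x\in P(x)$ always holds (which uses the reflexive form of Definition~\ref{def1}, taking the trivial permutations and the identity expression for $x$ itself), and that the $\mathcal{S}_n$-part condition of Lemma~\ref{lem31} really is an "absorbing/saturation" property rather than something stronger. Once those are invoked cleanly, each implication is a two-line argument, and I expect no substantial obstacle beyond making sure the symmetry of $\mathcal{S}_n$ is used where needed (e.g. to pass from $x\ \mathcal{S}_n\ y$ to $y\in P(x)$ via $P$ being defined from the "left" variable). I would therefore present the proof compactly, in the order $(i)\Rightarrow(ii)\Rightarrow(iii)\Rightarrow(i)$, citing Remark~\ref{rem1} and Lemma~\ref{lem31} at each step.
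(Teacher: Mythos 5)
Your proposal is correct and follows essentially the same route as the paper: the cycle $(i)\Rightarrow(ii)\Rightarrow(iii)\Rightarrow(i)$, with Remark~\ref{rem1} identifying $P(x)$ with the $\mathcal{S}_n$-neighbourhood of $x$ and Lemma~\ref{lem31} supplying the $\mathcal{S}_n$-part characterization. If anything, your verification in $(ii)\Rightarrow(iii)$ (checking the saturation condition for an arbitrary $u\in P(x)$ via transitivity of $\mathcal{S}^*_n$, not just for $u=x$) is slightly more careful than the paper's.
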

\begin{proof}
$(i)\Rightarrow(ii)$ By Remark \ref{rem1} and $(i)$, we trivially have $\mathcal{S}^*_n(x)=P(x)$.

$(ii)\Rightarrow(iii)$  Clearly $x\in P(x)$. If $x\ \mathcal{S}^*_n\ y$, then
$y\in \mathcal{S}^*_n(x)$ and by $(ii)$, we get $y\in P(x)$. Now in Lemma \ref{lem31} $(iii)\Rightarrow(i)$, put
 $K=P(x)$. It  implies that $P(x)$ is an $\mathcal{S}_n$-part of $L$.

$(iii)\Rightarrow(i)$ Let $x\ \mathcal{S}_n\ y$ and $y\ \mathcal{S}_n\ z$.
By Remark \ref{rem1}, we have $y\in P(x)$ and since $y\ \mathcal{S}_n\ z$, there exist
$t\in\mathbb{N}$ and $\ell_1,\ldots,\ell_{t}$  such that
$y\in\sum_{i=1}^{t} \ell_i\cap P(x)$ and $z\in \sum_{i=1}^{t} \ell'_{i}$. Now by $(iii)$, we have
$\sum_{i=1}^{t} \ell'_{i}\subseteq P(x)$ and hence $z\in P(x)$. Then by Remark \ref{rem1}, we get   $x\ \mathcal{S}_n\ z$
which shows that $\mathcal{S}_n$ is transitive. $\ \  \Box$
\end{proof}


{\bf Hesam Safa} \\
Department of Mathematics, Faculty of Basic Sciences, University of Bojnord, Bojnord, Iran.\\
E-mail address:  h.safa@ub.ac.ir \\
ORCID: 0000-0002-5418-8104  \\

{\bf Morteza Norouzi} \\
Department of Mathematics, Faculty of Basic Sciences, University of Bojnord, Bojnord, Iran.\\
E-mail address:  m.norouzi@ub.ac.ir,  \ \ \   m.norouzi65@yahoo.com \\
ORCID: 0000-0001-9850-1126  \\


\begin{thebibliography}{0}

\bibitem{adineh1} A. Adineh Zadeh, M. Norouzi, I. Cristea, The commutative quotient structure of $m$-idempotent hyperrings,
{\it An. St. Univ. Ovidius Constanta} {\bf 28} (2020), 219-236.

\bibitem{adineh2} A. Adineh Zadeh, M. Norouzi, I. Cristea, A comparison of complete parts on $m$-idempotent hyperrings, {\it Symmetry} {\bf 12} (2020), 554.

\bibitem{davv1} H. Aghabozorgi, B. Davvaz, M. Jafarpour, Nilpotent groups derived from hypergroups, {\it J. Algebra}  {\bf 382} (2013), 177-184.


\bibitem{engel} R. Ameri, E. Mohammadzadeh, Engel groups derived from hypergroups, {\it European J. Combin.} {\bf 44} (2015), 191-197.



\bibitem{davv3} S.M. Anvariyeh, S. Mirvakili and B. Davvaz, $\theta^\ast$-Relation on hypermodules and fundamental modules over commutative fundamental rings, {\it Commun. Algebra} {\bf 36}(2) (2008), 622-631.


\bibitem{cor1} P. Corsini, {\it Prolegomena of Hypergroup Theory}, Aviani Editore, 1993.

\bibitem{cor} P. Corsini, V. Leoreanu-Fotea, {\it Applications of Hyperstructure Theory}, Kluwer Academic Publishers, 2003.

\bibitem{davv10} B. Davvaz, {\it Polygroup Theory and Related Systems}, World Scientific, 2013.

\bibitem{davv4} B. Davvaz, V. Leoreanu-Fotea, {\it Hyperring Theory and Applications}, Int. Academic Press, USA, 2007.

\bibitem{davv5} B. Davvaz, R.M. Santilli, T. Vougiouklis, Algebra, hyperalgebra and Lie-santilli theory, {\it J. Gen. Lie Theory Appl.}  {\bf 9} (2015), 231.

\bibitem{davv6} B. Davvaz and T. Vougiouklis, Commutative rings obtained from hyperrings ($H_\upsilon$-rings) with $\alpha^{*}$-relations,
 {\it Commun. Algebra} {\bf 35} (11) (2007), 3307-3320.

\bibitem{erd} K. Erdmann, M.J. Wildon, {\it Introduction to Lie Algebras},  Springer undergraduate Mathematics series, 2006.

\bibitem{F01} D. Freni, A note on the core of a hypergroup and the transitive closure $\beta^{*}$ of $\beta$, {\it Riv. Mat. Pura Appl.} {\bf 8} (1991), 153-156.

\bibitem{F02} D. Freni, A new characterization of the derived hypergroup via strongly regular equivalences, {\it Commun. Algebra} {\bf 30} (8) (2002), 3977-3989.


\bibitem{davv7} P. Ghiasvand, S. Mirvakili, B. Davvaz, Boolean rings obtained from hyperrings with $\eta^{*}_{1,m}$-relations, {\it Iran. J. Sci. Technol. Trans. A Sci.} {\bf 41} (1) (2017), 69-79.

\bibitem{davv8} M. Jafarpour, H. Aghabozorgi, B. Davvaz, Solvable groups derived from hypergroups, {\it J. Algebra Appl.} {\bf 15} (2016),
 1650067.

\bibitem{kos} M. Koskas, Groupoides, demi-hypergroupes et hypergroupes, {\it J. Math. Pures Appl.} {\bf 49} (9) (1970), 155-192.




\bibitem{ak} F. Marty, Sur une g\'en\'eralisation de la notion de groupe, in: 8$\grave{\text{e}}$me Congr$\grave{\text{e}}$s des Math\'ematiciens Scandinaves, Stockholm, 1934, pp. 45-49.

\bibitem{mig} R. Migliorato, Fundamental relation on non-associative hypergroupoids, {\it Ital. J. Pure Appl. Math.} {\bf 6} (1999), 147-160.

\bibitem{mousavi} S.S. Mousavi, V. Leoreanu-Fotea, M. Jafarpour, Cyclic groups obtained as quotient hypergroups,
{\it An. Stiint. Univ. Al. I. Cuza Iasi, Ser. Noua, Mat.} {\bf 61} (1) (2015), 109-122.

\bibitem{nor1} M. Norouzi, I. Cristea, Fundamental relation on $m$-idempotent hyperrings, {\it Open Math.} {\bf 15} (2017), 1558-1567.

\bibitem{nor2} M. Norouzi, I. Cristea, Transitivity of the $\varepsilon_{m}$-relation on ($m$-idempotent) hyperrings, {\it Open Math.} {\bf 16} (2018), 1012-1021.

\bibitem{s-n} H. Safa, M. Norouzi, On Lie algebras derived from Lie hyperalgebras, {\it Commun. Algebra} (2021), DOI: 10.1080/00927872.2021.1980578.

\bibitem{vo}  T. Vougiouklis, {\it Hyperstructures and Their Representations}, Hadronic Press, Inc., Palm Harber, USA, 1994.

\bibitem{voug2} T. Vougiouklis, The fundamental relation in hyperrings. The general hyperfield, in:  4th Int. Congress on Algebraic Hyperstructures and Applications (AHA), Xanthi, 1990, pp. 203-211. \\



\end{thebibliography}
\end{document}